\newtheorem{theorem}{Theorem}[section]
\newtheorem*{theorem*}{Theorem}
\newtheorem{proposition}[theorem]{Proposition}
\newtheorem{lemma}[theorem]{Lemma}
\newtheorem{corollary}[theorem]{Corollary}
\newtheorem{example}[theorem]{Example}
\newtheorem{remark}[theorem]{Remark}
\newtheorem*{lemma*}{Lemma}
\newtheorem*{remark*}{Remark}
\newtheorem*{example*}{Example}
\newcommand{\mb}[1]{\mathbb{#1}}
\newcommand{\Z}{\mathbb{Z}}
\newcommand{\supp}{\operatorname{Supp}}
\newcommand{\ex}{\operatorname{Ex}}
\newcommand{\C}{\mathbb C}
\newcommand{\f}{\tilde \phi}
\newcommand{\de}{\operatorname{det}}
\newcommand{\pn}{\operatorname{\rho}}
\newcommand{\degree}{\operatorname{deg}}
\newcommand{\NS}{\operatorname{NS}}
\newcommand{\rk}{\operatorname{rk}}
\makeatletter\@addtoreset{equation}{section}\makeatother
\makeatletter\@addtoreset{subsection}{equation}\makeatother
\begin{document}

\pagestyle{plain}

\title{Rational maps and $\mathrm{K3}$ surfaces}

\author{Ilya Karzhemanov and Grisha Konovalov}

\begin{abstract}
For a very general complex projective $\mathrm{K3}$ surface $S$
and a smooth projective surface $A$ with trivial canonical class,
we prove that there is no dominant rational map $A \dasharrow S$,
which is not an isomorphism.
\end{abstract}

\address{\newline{\normalsize Laboratory of AGHA, Moscow Institute of Physics and Technology, 9 Institutskiy per., Dolgoprudny,
Moscow Region, 141701, Russia}
\newline{\it E-mail address}: {\tt karzhemanov.iv@mipt.ru}}

\medskip

\address{\newline{\normalsize  HSE University, 6 Usacheva str., Moscow, 119048, Russia}
\newline{\it E-mail address}: {\tt gkonovalov@hse.ru}}

\thanks{{\it MS 2010 classification}: 14J28, 14E05, 14J10}

\thanks{{\it Key words}: $\mathrm{K3}$ surface, Abelian
surface, rational map, Picard number}

\sloppy

\maketitle

\bigskip

\section{Introduction}

The purpose of this note is to contribute to the study of rational
maps with target a $\mathrm{K3}$ surface (we will mainly consider
\emph{smooth complex projective} surfaces). A particular question
is this:

$$
\begin{array}{c}
\textit{Does the existence of a dominant rational map, which is
\emph{not an isomorphism}, from a surface with}
\\
\textit{trivial canonical class to a given $\mathrm{K3}$ surface
constrain this $\mathrm{K3}$ surface from being ``general
enough''?}
\end{array}
$$

The answer is provided by the following

\begin{theorem}
\label{main} Suppose there exists a dominant rational map $\phi
\colon A \dasharrow S$ from a surface $A$ with trivial canonical
class to a $\mathrm{K3}$ surface $S$. Suppose also that $\degree
\phi \ge 2$. Then the Picard number $\pn(S)$ is \emph{strictly
greater} than $1$.
\end{theorem}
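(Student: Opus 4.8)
The plan is to resolve the map and exploit that both canonical classes are trivial. Choose a resolution of indeterminacy $g\colon\tilde A\to A$ (a composition of blow‑ups) with $f:=\phi\circ g\colon\tilde A\to S$ a morphism, generically finite of degree $d=\degree\,\phi\ge 2$. Since $K_A\sim 0$ we get $K_{\tilde A}=g^*K_A+E=E$ for the (effective, $g$‑exceptional) divisor $E$, while $K_S\sim 0$ and the ramification formula give $K_{\tilde A}=f^*K_S+R=R$ for the ramification divisor $R$ of $f$. As $h^0(\tilde A,K_{\tilde A})=h^0(A,\mathcal O_A)=1$ and $E\in|K_{\tilde A}|$, this forces $R=E$; moreover $E\ne 0$, for otherwise $f$ would be an unramified cover of degree $d\ge 2$ of the simply connected $S$. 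Hence the ramification of $f$ is $g$‑exceptional; in particular the branch curve $B\subset S$ consists of images of rational curves, and $B\ne\emptyset$ (else $f$ is étale over $S$ minus finitely many points, again impossible since deleting points does not change $\pi_1$).

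Secondly, $f^*$ is an injective morphism of $\mathbb{Q}$–Hodge structures $H^2(S,\mathbb{Q})\hookrightarrow H^2(\tilde A,\mathbb{Q})$ sending the line $H^{2,0}(S)$ onto $H^{2,0}(\tilde A)$, so it carries the transcendental lattice $T(S)$ isomorphically onto $T(\tilde A)=T(A)$; thus $22-\pn(S)=\dim T(S)=\dim T(A)$. If $A$ is abelian this reads $22-\pn(S)=6-\pn(A)\le 5$, so $\pn(S)\ge 17>1$ and the theorem holds. Hence we may assume $A$ is a $\mathrm{K3}$ surface, whence $\pn(S)=\pn(A)$; suppose for contradiction $\pn(S)=1$, so $\pn(A)=1$. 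Then $A$ carries no curve of negative self‑intersection, $\NS(A)=\mathbb{Z}h_A$ and $\NS(S)=\mathbb{Z}h_S$ with $h_A,h_S$ ample, $\operatorname{Aut}(A)$ is trivial unless $h_A^2=2$ (in which case it is generated by the double–plane involution), and every nonzero effective divisor on $S$ is ample; so $B\in|b\,h_S|$ with $b\ge 1$.

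Take the Stein factorization $f=\pi\circ\nu$, $\nu\colon\tilde A\to Z$ birational, $\pi\colon Z\to S$ finite of degree $d$. Then $Z$ is normal, birational to $A$, so $\kappa(Z)=0$, and $K_Z\sim R_\pi$ (the ramification of $\pi$) is an effective Weil divisor whose strict transform on a resolution is exceptional over $A$; consequently every component of $B$ is a rational curve. When $d=2$ this already gives a contradiction: the Galois involution $\iota$ of $\pi$ lifts to an involution $\hat\iota$ of a minimal resolution $\hat Z$ of $Z$ — which, having $\kappa=0$ and being birational to $A$, is a blow‑up of $A$ — and $\hat\iota$ therefore descends to an involution $\bar\iota$ of the minimal model $A$. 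If $\bar\iota=\mathrm{id}$ then $\hat\iota=\mathrm{id}$, absurd; so $\operatorname{Aut}(A)\ne\{1\}$, forcing $h_A^2=2$ and $A/\bar\iota\cong\mathbb{P}^2$. But $\hat Z/\hat\iota$ is birational both to $Z/\iota=S$ and to $A/\bar\iota=\mathbb{P}^2$, so $S$ would be rational — impossible for a $\mathrm{K3}$ surface.

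For $d\ge 3$ I would pass to the Galois closure $W\to S$ of $\mathbb{C}(A)/\mathbb{C}(S)$, with group $G$ of order $\ge 3$ acting on a $G$‑equivariant resolution $\hat W$, $\hat W/G$ birational to $S$. If $\kappa(\hat W)=1$, the $G$‑equivariant Iitaka fibration would fibre $S$ over a curve, excluded by $\pn(S)=1$. If $\kappa(\hat W)=0$, the minimal model $M$ of $\hat W$ carries a faithful $G$‑action with $M/G$ birational to $S$, and each case is excluded: $M$ a $\mathrm{K3}$ forces $G$ to act symplectically, whence the resolution of $M/G$ — so $S$ — has $\pn\ge 9$; $M$ abelian or bielliptic exhibits $S$ as dominated by an abelian surface, so $\pn(S)\ge 17$ by the second paragraph; $M$ Enriques gives $\pn(S)=\pn(\tilde M)=10$ for the covering $\mathrm{K3}$ surface $\tilde M$, again by the second paragraph. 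The main obstacle is the remaining case $\kappa(\hat W)=2$: one must rule out that a $\mathrm{K3}$ surface of Picard number $1$ is birational to a finite quotient of a surface of general type arising as such a Galois closure. I expect this to require a quantitative study of the branch divisor $B$ and of the singularities of $Z$ — e.g. through the bundle $\pi_*\mathcal O_Z$ and its Chern classes together with $\kappa(\tilde A)=0$, using that under $\pn(S)=1$ both $B$ and all of its components are confined to multiples of the single polarization $h_S$ — so as to contradict $\kappa(Z)=0$; the $d=2$ double‑cover computation above is the model to be generalized.
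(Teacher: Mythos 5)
Your argument is incomplete, and you say so yourself: after reducing to the case where $A$ is a $\mathrm{K3}$ surface with $\pn(A)=\pn(S)=1$ and $d=\degree\,\phi\ge 3$, you pass to the Galois closure $W\to S$ and are unable to rule out $\kappa(\hat W)=2$. That is not a peripheral case --- for a general quotient construction there is no reason for the Galois closure of a degree $d\ge 3$ extension $\C(A)/\C(S)$ to have Kodaira dimension $\le 1$, so this is the main body of the problem, and the ``quantitative study of $B$'' you defer to is exactly the missing proof. There is also a flaw in the subcase $\kappa(\hat W)=1$: the $G$-equivariant Iitaka fibration only gives a \emph{rational} map from $S$ to a curve, and a $\mathrm{K3}$ surface with $\pn(S)=1$ does admit rational pencils over $\mb P^1$ (e.g.\ a pencil inside $|H_S|$, or the one-dimensional Severi family of geometric genus one curves in $|kH_S|$); $\pn(S)=1$ only excludes a \emph{morphism} to a curve, so you would still have to rule out $C/G\simeq\mb P^1$. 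The ingredients you do establish correctly --- that the ramification divisor of the resolved map equals the $p$-exceptional divisor (this is the paper's Lemma~\ref{support}), the transcendental-lattice bound $\pn(S)\ge 17$ when $A$ is Abelian (Remark~\ref{lattices}), and the degree-two involution argument --- do not close the gap.

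For comparison, the paper never stratifies by $\degree\,\phi$ or by the birational type of a Galois closure. Instead it runs a dichotomy on the branch curves $f(Z_j)$: if the \emph{entire} preimage $f^*(f(Z_j))$ of some branch component lies in the exceptional locus of $p$, then Mumford's negative-definiteness of the exceptional intersection matrix forces $(f(Z_j))^2<0$, i.e.\ $f(Z_j)$ is a $(-2)$-curve, which is impossible when $\pn(S)=1$; otherwise every branch curve is also dominated by non-exceptional curves, so $\phi$ restricts to a surjective unramified (but non-covering) map $A'\to S'$ off finite sets of points, and a projective-geometric argument with the linear systems $|H_A|=\phi^*|H_S|$ --- showing $\phi(C)$ is smooth for general $C\in|H_A|$, hence $\phi^{-1}\phi(C)=C$, hence $\phi$ would be a morphism --- yields the contradiction uniformly in $d$. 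If you want to salvage your outline, the place to look is precisely this local-isomorphism-plus-linear-systems step, which replaces your entire $\kappa(\hat W)$ case analysis.
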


The study of dominant rational maps between $\mathrm{K3}$ and
Abelian surfaces goes back to the papers \cite{S-I} and \cite{Mo}
(there the authors used the lattice\,-\,theoretic point of view).
Also, a special case of Theorem~\ref{main}, concerning general
quartic surfaces, follows from the results obtained by C.~Voisin
in \cite[Theorem 2]{Vo} (she took the IVHS approach). We unify
these trends in a way by employing some simple birational and
projective geometry (cf. Remark~\ref{lattices} below).

Let us collect some examples supporting Theorem~\ref{main}.

\begin{example}[Kummer surfaces]
\label{kummer} Take an Abelian surface $A$ and consider the
involution $\tau \colon A \longrightarrow A$ acting as $x \mapsto
-x$ for all $x \in A$. The minimal resolution $\mathrm{Km}\,A
\longrightarrow A/\tau$ of the quotient $A/\tau$ (extracting
sixteen $(-2)$\,-\,curves out of ordinary double points) gives a
$\mathrm{K3}$ surface admitting a rational map $A \dasharrow
\mathrm{Km}\,A$ of degree $2$. One has $\rho(\mathrm{Km}\,A) = 16
+ \rho(A)$ in this case.
\end{example}

In view of Example~\ref{kummer} it is worth noting that already
the case of $\mathrm{K3}$ surfaces in \emph{positive
characteristic} ($\ge 5$) indicates the setup of
Theorem~\ref{main} is not so trivial. Indeed, it is known that the
supersingular Kummer surfaces form a $1$\,-\,dimensional family,
whereas there is a $9$\,-\,dimensional family of all supersingular
$\mathrm{K3}$ (see \cite[Section 5]{L} for precise results and
references). This, together with the ``rational sandwich theorem''
from \cite{L1}, implies that there exist \emph{non\,-\,Kummer}
(supersingular) $\mathrm{K3}$ surfaces dominated by Abelian ones.

Here are more examples over $\C$:

\begin{example}[Shioda surfaces]
\label{example_sh} Let $S$ be an elliptic $\mathrm{K3}$ surface
with a section and two singular fibers of type
$\uppercase\expandafter{\romannumeral 2}{^*}$ (plus some other
singular fibers). It was proved by T.~Shioda in \cite{Shi} that
there exists a Kummer surface $\mathrm{Km}\,C_1 \times C_2$, where
$C_i$ are elliptic curves, and two rational maps $\mathrm{Km}\,C_1
\times C_2 \dasharrow S$, $S \dasharrow \mathrm{Km}\,C_1 \times
C_2$ of degree $2$. One can also compute that $\rho(S) \ge 18$.

The existence of such $S$ follows from the results in \cite{N}.
Namely, all $\mathrm{K3}$ surfaces $Y$, admitting a
\emph{primitive} embedding of lattices
$$
U \oplus E_8(-1)^{\oplus 2} \oplus \Z\left<-2d\right>
\hookrightarrow \NS(Y)
$$
for some $d \ge 1$, form a moduli space of dimension one. Any such
$Y$ possesses a jacobian elliptic fibration (that is it has a
section) and two singular fibers of type
$\uppercase\expandafter{\romannumeral 2}{^*}$. Also for very
general such $Y$ the equality $U \oplus E_8(-1)^{\oplus 2} \oplus
\Z\left<-2d\right> = \NS(Y)$ holds. Moreover, the lattice $U
\oplus E_8(-1)^{\oplus 2} \oplus \Z\left<-2d\right>$ has a
\emph{unique}, up to isometries, primitive embedding into the
$\mathrm{K3}$ lattice (see \cite[Theorem 1.14.4]{N1}).
Consequently, the transcendental lattice $T(Y)$ coincides with $U
\oplus \Z\left<2d\right>$, and it follows from \cite[Corollary
4.4]{Mo} that $Y$ is not a Kummer surface.

Finally, let us indicate that for \emph{even} $d$ there is a
geometric construction of $Y$ due to K.~Hulek and M.~Sch\"utt, see
\cite{H-S}. They proved that any such $Y$ is obtained by a
quadratic base change from a rational (jacobian) elliptic surface.
It was also shown that $Y$ represents a specific member of the
so\,-\,called \emph{Barth -- Peters family} (cf. \cite[Lemma
4.8]{H-S}).
\end{example}

\begin{example}[Symplectic automophisms]
An automorphism $\sigma$ of a $\mathrm{K3}$ surface $A$ is called
\emph{symplectic} if $\sigma$ induces trivial action on $H^0(A,
\Omega_A^2) \simeq \C$. Let $G \subset \mathrm{Aut}(A)$ be a
finite group of symplectic automorphisms. The minimal resolution
$S \longrightarrow A/G$ provides a $\mathrm{K3}$ surface $S$
together with a dominant rational map $A \dasharrow S$. It is easy
to see that $\rho(S) \ge 9$. Indeed, the morphism $A
\longrightarrow A/G$ can not be \'etale, hence the resolution $S
\longrightarrow A/G$ has an exceptional locus. Note that with a
detailed analysis of possible $G$ one can describe the Picard
group of $S$ more precisely (see \cite[Chapter 15]{Hu}).
\end{example}

\begin{example}
\label{minus-two} Let $S \subset \mb P^1 \times \mb P^2$ be given
by a general divisor of type $(2,3)$. Projection on the first
factor yields an elliptic fibration $\pi: S \longrightarrow \mb
P^1$. Then there exists a rational map $S \dasharrow S$ of degree
$16$ which induces the morphism $[4]: E \longrightarrow E$ on
every smooth fiber $E$ of $\pi$ (cf. \cite{D}). Note that $\rho(S)
= 2$ and so the estimate in Theorem~\ref{main} is sharp. Also
observe that the quadratic form on $\NS(S)$ is $2x^2 + 6xy$ and
hence $S$ does not contain $(-2)$\,-\,curves --- contrary to the
claim in the first version of our paper. We are indebted to S.
Galkin and E. Shinder for communicating this construction to us.
\end{example}

\begin{remark}
\label{lattices} Suppose that $A$ in Theorem~\ref{main} is an
Abelian surface and take the resolution of indeterminacies of
$\phi$ as in the diagram \eqref{diag} below. We have $T(W) = T(A)$
for the transcendental lattices of $A$ and its blow\,-\,up $W$. It
also follows from the projection formula for the resolved morphism
$q: W \longrightarrow S$ that $q^*$ gives an embedding $T(S)
\hookrightarrow T(W)$. Thus $\rk\,T(S) \le \rk\,T(A) \le 5$ and so
$\rho(S) \ge 17$ (we would like to thank E. Shinder for pointing
out this argument). Yet let us note that our result provides more
impact on the geometry of $S$. In particular, when $A = S$ is
\emph{general}, Theorem~\ref{main} improves the main result of
\cite{Ch} (our proof is also considerably shorter). We refer to
\cite{B-S-V}, \cite{Ma}, \cite{N2} and \cite{A-R-V}, \cite{B},
\cite{Ka} for relevant studies of dominant rational maps between
different $\mathrm{K3}$ surfaces and some other classes of
algebraic varieties.
\end{remark}

\section{The proof of Theorem~\ref{main}}

Consider the following commutative diagram:

\begin{equation}
\label{diag} \xymatrix{
& W \ar[dl]_p \ar[dr]^q  \ar[rr]^g && B \ar[dl]^f \\
A \ar@{-->}[rr]^{\phi} && S }
\end{equation}

Here $p$ blows up the indeterminacy locus of $\phi$, $g$ and $f$
are a proper contraction and a finite morphism, respectively,
provided by the Stein factorization of $q$, and $B$ is a normal
surface.

The idea behind the proof of Theorem~\ref{main} is that the
exceptional set $\ex(p)$ of $p$ should constrain the geometry of
$S$ and $\phi$.

The next lemma shows that $\phi$ does not contract any curves:

\begin{lemma}
\label{exceptional} The inclusion $\ex(g) \subseteq \ex(p)$ holds.
\end{lemma}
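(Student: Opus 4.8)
The statement $\ex(g)\subseteq\ex(p)$ says: any curve on $W$ contracted by $g$ (equivalently by $q$, since $f$ is finite) is already contracted by $p$ down to a point on $A$. Equivalently, no curve $C\subset W$ which maps onto a curve in $A$ can be contracted by $q$. My plan is to argue by contradiction: suppose $C\subset W$ is an irreducible curve with $q(C)$ a point but $p(C)$ a curve $D\subset A$; I will derive a contradiction with $K_A\sim 0$ (triviality of the canonical class of $A$).

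The key computation is a discrepancy/ramification comparison along $C$. Since $p\colon W\to A$ is a blow-up (a composition of smooth point blow-ups, as $A$ is a smooth surface), we have $K_W = p^*K_A + E$ where $E = \ex(p)$ is an effective divisor with support exactly the $p$-exceptional locus; because $K_A\sim 0$ this reads $K_W\sim E$, an effective divisor supported on $\ex(p)$. On the other side, $q\colon W\to S$ factors as $W\xrightarrow{g} B\xrightarrow{f} S$ with $g$ a birational contraction and $f$ finite; pulling back $K_S\sim 0$ and using the ramification formula for $f$ and the discrepancy formula for $g$, I get $K_W \sim g^*K_B + (\text{effective }g\text{-exceptional})$ and $K_B \sim f^*K_S + R = R$ with $R$ the (effective) ramification divisor of $f$. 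Hence $K_W$ is linearly equivalent to an effective divisor $F$ whose support is contained in $\ex(g)\cup g^{-1}(\supp R)$. Now if $C$ were $g$-exceptional but not $p$-exceptional, then $C\not\subseteq\supp E$, so $C$ appears in $F$ but not in $E\sim K_W$; comparing the two effective representatives of $K_W$ and intersecting suitably (or using that $W$ is a smooth projective surface so linear equivalence classes have well-defined components only up to moving, hence I should instead intersect with an ample class or with $C$ itself) should force a contradiction.

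The cleanest version: take the resolved morphism $q$ and a curve $C$ with $q(C) = \mathrm{pt}$. Then $C^2 < 0$ by the Hodge index theorem / negativity of contracted curves (Grauert), and $K_W\cdot C = (2g(C)-2) - C^2$ by adjunction. If in addition $p(C)$ is a curve $D$, then $p_*C = (\deg(C/D))\,D$ with $p^*D = C + (\text{stuff in }\ex(p))$, and one computes $K_W\cdot C = (p^*K_A + E)\cdot C = E\cdot C$, which is the intersection of $C$ with the exceptional locus of $p$ — a controlled nonnegative quantity when $C\not\subseteq\ex(p)$. I will combine this with the fact that $C$ must meet $\ex(p)$ enough to make $C^2$ sufficiently negative, and show this is incompatible with $C$ dominating the curve $D\subset A$ on which the (trivial) canonical bundle imposes $K_W\cdot C = E\cdot C \ge 0$ while $g$-contractibility plus $f$ finite would instead require, via the ramification divisor being effective and $C$ not $p$-exceptional, a strict inequality of the wrong sign. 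Essentially I want: $g$-contracted $\Rightarrow C$ lies in the support of the effective divisor $K_W - p^*(\text{something trivial})=E$, forcing $C\subseteq\ex(p)$.

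The main obstacle I anticipate is the bookkeeping of the ramification/branch divisor of the finite map $f\colon B\to S$ together with the singularities of the normal surface $B$: one has to be careful that $B$ need not be smooth, so the discrepancy formula for $g$ and the ramification formula for $f$ must be set up on the normal surface $B$ (using that $S$ is smooth with $K_S\sim 0$, and that $f$ is finite so $K_B = f^*K_S + R$ holds with $R\ge 0$ once $B$ is normal). Once that is in place, the contradiction is a one-line intersection-number computation, but getting the effectivity of all the correction terms stated correctly on $B$ — and checking that "$g$-exceptional" genuinely feeds a component into an effective representative of $K_W$ that is not cancelled — is where the care is needed.
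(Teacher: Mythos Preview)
Your instinct to compare the two expressions for $K_W$ is right, but neither of the routes you sketch actually closes. The detour through $B$ is the problem you yourself flag and do not resolve: writing $K_W = g^*K_B + G$ with $G$ an \emph{effective} $g$-exceptional divisor containing every $g$-exceptional curve requires $B$ to have terminal (or at least canonical) singularities, and you have no a priori control over the singularities of the merely normal surface $B$. Even if you had two effective representatives $E,F\in|K_W|$ with $C\subseteq\supp F$ and $C\not\subseteq\supp E$, that is no contradiction by itself; the key fact you never invoke is $h^0(W,K_W)=h^0(A,K_A)=1$, which would force $E=F$ as divisors. Your fallback intersection computation ($K_W\cdot C=E\cdot C\ge 0$ when $C\not\subseteq\ex(p)$, together with $C^2<0$) is perfectly compatible with, e.g., $C$ being the strict transform of a $(-2)$-curve on a $\mathrm{K3}$ surface $A$ disjoint from the blown-up points, so it cannot produce a contradiction on its own.

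The paper's argument bypasses $B$ entirely and is much shorter. Restrict $\phi$ to a morphism $\tilde\phi\colon A'\to S$ (with $A'=A$ minus finitely many points) and look at the determinant $\det\gamma\colon\tilde\phi^*K_S\to K_{A'}$ of the cotangent map. Since both line bundles are \emph{trivial}, $\det\gamma$ is a global constant; it is nonzero because $\gamma$ is an isomorphism at the generic point. Hence $\Omega^1_{\tilde\phi}=0$, i.e.\ $q$ is unramified on $W\setminus\ex(p)$, so $\supp\,\Omega^1_q\subseteq\ex(p)$. Since any $g$-contracted curve lies in $\supp\,\Omega^1_g\subseteq\supp\,\Omega^1_q$, the inclusion follows. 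If you prefer your divisorial framework, the clean fix is: define the ramification divisor $R_q$ of $q\colon W\to S$ directly as the vanishing locus of the Jacobian section of $K_W\otimes q^*K_S^{-1}\cong K_W$ (both $W$ and $S$ are smooth, so no Stein factorization is needed); every $q$-contracted curve lies in $\supp R_q$, and $R_q=E$ since $h^0(K_W)=1$.
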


\begin{proof}
Restrict $\phi$ to appropriate $A^\prime := A \setminus \{\text{a
finite set of points}\}$ to obtain a \emph{morphism} $\f \colon
A^\prime \longrightarrow S$. Consider the exact sequence
$$
0 \to \f^*\Omega^1_S \xrightarrow{\gamma} \Omega^1_{A^\prime} \to \Omega^1_{\f} \to 0
$$
of sheaves of differentials. It follows that
$$
\supp \Omega^1_{\f} = V(\de \gamma), \text{the scheme of zeros},
$$
for $\de\,\gamma$ being the morphism $\f^*K_S \longrightarrow
K_{A^\prime}$ induced by $\gamma$. Further, since the canonical
classes of $A$, $A^\prime$ and $S$ are trivial, we get that $\de
\gamma$ is a constant. Moreover, we have $\de \gamma \ne 0$, since
$\gamma$ induces an isomorphism of stalks over the generic point.
Thus we obtain $\Omega^1_{\f} = 0$. Then, since $q\big\vert_{W
\setminus \ex(p)} = \f$ for a natural identification $A^\prime = W
\setminus \ex(p)$, we obtain
\begin{equation}
\label{omega-q} \supp \Omega^1_q \subseteq \ex(p)
\end{equation}
for the relative cotangent sheaf $\Omega^1_q$ of $q$.

Note that $\ex(g) = \supp \Omega^1_g$ because $g$ is a proper
contraction. Note also that $\supp \Omega^1_g \subseteq \supp
\Omega^1_q$ by the exact sequence
$$
g^*\Omega^1_f \to \Omega^1_q \to \Omega^1_g \to 0
$$
(cf. \eqref{diag}). The inclusion $\ex(g) \subseteq \ex(p)$ of
exceptional loci now follows from \eqref{omega-q}.
\end{proof}

Write $\ex(p) = \displaystyle\bigcup_i E_i$ for smooth rational
exceptional curves $E_i$. Let also $R := \displaystyle\sum_j Z_j$
be the ramification divisor of $f$ with $Z_j \subset B$ some (not
necessarily distinct) irreducible curves. This $R$ is defined in
terms of the canonical classes as follows:
\begin{equation}
\label{ram-div} R\big\vert_{B \setminus g(\ex(g))} = K_{B
\setminus g(\ex(g))} \otimes (f^{*}K_S^{\vee})\big\vert_{B
\setminus g(\ex(g))} = K_{B \setminus g(\ex(g))}.
\end{equation}
We can also put $B \setminus g(\ex(g)) = W \setminus \ex(g)$ and
identify $R$ with its closure in $W$.

\begin{lemma}
\label{support} $R \subset W$ consists of exactly those $E_i$ that
are \emph{not} contracted by $g$.
\end{lemma}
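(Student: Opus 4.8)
The plan is to analyze the ramification divisor $R$ of the finite morphism $f\colon B \to S$ by pulling everything back to $W$ via $g$, using the Hurwitz-type formula and the fact that $K_W$ is supported on $\ex(p)$. First I would recall that, away from $g(\ex(g))$, we have the identification $B \setminus g(\ex(g)) = W \setminus \ex(g)$, and there $R$ coincides with $K_{W \setminus \ex(g)}$ since $K_S$ is trivial. So the question reduces to identifying the canonical divisor of $W$ on the locus where $g$ is an isomorphism. Because $p\colon W \to A$ is a blow-up of the (Abelian, or more generally trivial-canonical-class) surface $A$, we have $K_W = p^*K_A + \sum_i a_i E_i = \sum_i a_i E_i$ with all $a_i \ge 1$ (each $E_i$ is a smooth rational $(-1)$- or iterated exceptional curve with positive discrepancy); in particular $K_W$ is an effective divisor supported exactly on $\bigcup_i E_i = \ex(p)$.

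Next I would intersect this description of $K_W$ with the locus $W \setminus \ex(g)$. By Lemma~\ref{exceptional} we have $\ex(g) \subseteq \ex(p)$, so $\ex(g)$ is itself a union of some of the $E_i$; call these the contracted ones. Removing $\ex(g)$ from $W$ deletes precisely those $E_i$ that $g$ contracts, and leaves the remaining $E_i$ (those not contracted by $g$) intact as curves in $W \setminus \ex(g)$. Hence $K_{W\setminus \ex(g)} = \sum_{E_i \not\subset \ex(g)} a_i\, E_i$, and taking closures in $W$ identifies $R$ with $\sum_{E_i \not\subset \ex(g)} a_i E_i$. The content of the lemma is the set-theoretic statement that $R$ and $\bigcup_{E_i \not\subset \ex(g)} E_i$ have the same support, which follows once we know every discrepancy $a_i$ appearing here is strictly positive; this is automatic for a blow-up of a smooth surface.

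The main obstacle I anticipate is bookkeeping about the two competing notions of "exceptional": a curve $E_i$ contracted by $q$ but whose image survives in $B$ is \emph{not} in $\ex(g)$ yet may still be $p$-exceptional, and I must be careful that such curves genuinely contribute to $R$ rather than being absorbed. The key point to nail down is that $f$ is ramified along $f(g(E_i))$ for every non-$g$-contracted $E_i$: since $K_B$ pulls back (off the contracted locus) to $K_W = \sum a_i E_i$ with $a_i > 0$, and $f^*K_S = f^*\mathcal{O}_S$ is trivial, the image curve $g(E_i)$ must appear in $R$ with multiplicity $a_i > 0$. Conversely any component of $R$ lifts to an effective canonical divisor on $W\setminus\ex(g)$, which by the discrepancy formula can only be supported on $p$-exceptional curves; so no extraneous components appear. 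Putting the two inclusions together gives the claimed equality of supports.
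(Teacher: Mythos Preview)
Your proposal is correct and follows essentially the same route as the paper: identify $R$ with $K_W$ on $W\setminus\ex(g)$ via Hurwitz and $K_S=0$, then use that $K_W$ is an effective divisor supported exactly on $\ex(p)=\bigcup_i E_i$ since $K_A=0$. Your version is in fact slightly more careful than the paper's, which writes $K_W=\sum_i E_i$ with all coefficients equal to $1$ (literally true only if $p$ is a blow-up at distinct reduced points), whereas you correctly allow discrepancies $a_i\ge 1$; for the support statement this makes no difference, and your ``obstacle'' paragraph about $q$-contracted but non-$g$-contracted $E_i$ is unnecessary since $f$ is finite.
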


\begin{proof}
We have $K_W = \displaystyle\sum_i n_iE_i$ for some $n_i \in
\mathbb{N}$ by the ramification formula applied to $p$ (recall
that $K_A$ is trivial). Now the claim follows from
$$
R\big\vert_{B \setminus g(\ex(g))} = K_{B \setminus g(\ex(g))} =
\sum_i n_iE_i\big\vert_{W \setminus \ex(g)},
$$
where the first identity is due to the Hurwitz formula applied to
the finite morphism $f: B \longrightarrow S$, together with the
fact that $K_S$ is trivial.
\end{proof}

The following lemma proves a special case of Theorem \ref{main}:

\begin{lemma}
\label{ramification} Suppose that $\supp f^*(f(Z_j)) \subseteq R$
for some $j$. Then $f(Z_j)$ is a $(-2)$\,-\,curve on $S$ and hence
$\rho(S) \ge 2$.
\end{lemma}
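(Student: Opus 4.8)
The plan is to extract numerical information about the curve $C := f(Z_j)$ on $S$ from the hypothesis, and then invoke the adjunction/genus formula on the $\mathrm{K3}$ surface $S$, where a curve $C$ with $C^2 = -2$ is exactly a smooth rational curve. First I would set up notation for the finite morphism $f \colon B \longrightarrow S$ restricted away from the (finitely many) images of contracted curves, where it is a genuine finite surjective map of smooth surfaces; by Lemma \ref{support} the curve $Z_j$ is one of the non-contracted $E_i$, hence a smooth rational curve, and it appears in the ramification divisor $R$ of $f$. The hypothesis $\supp f^*(f(Z_j)) \subseteq R$ says that the entire preimage of $C$ consists of curves occurring in $R$, i.e. every component of $f^{-1}(C)$ is a ramification curve of $f$.

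Next I would write out the Hurwitz (ramification) formula for $f$ over the locus where it is finite: $K_B = f^*K_S + R = R$, since $K_S$ is trivial. Pulling back $C$ and using that $\deg(f|_{Z_j})$ copies (with multiplicity) of $Z_j$ appear in $f^*C$, I would compare the two sides. The key computation is a projection-formula / push-pull manipulation: on one hand $f_*(Z_j \cdot f^*C) = f_*(Z_j) \cdot C = (\deg f|_{Z_j})\, C^2$ (if $Z_j$ dominates $C$) by the projection formula; on the other hand, since every component of $f^*C$ lies in $R = K_B$, I can bound or compute $Z_j \cdot f^*C$ in terms of intersections with $K_B$, and then use adjunction on $B$: $Z_j \cdot (Z_j + K_B) = 2p_a(Z_j) - 2 = -2$ because $Z_j \cong E_i \cong \mathbb P^1$ is smooth rational. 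Combining $Z_j \cdot K_B = Z_j \cdot R$ with the containment $\supp f^*C \subseteq R$ forces $Z_j \cdot f^*C \ge Z_j \cdot Z_j$ (the component $Z_j$ itself contributes, and all other components of $f^*C$ also lie in $R$), which after pushing forward yields $(\deg f|_{Z_j})\, C^2 \le -2 \cdot(\text{something positive})$, so $C^2 < 0$. Since $S$ is a $\mathrm{K3}$ surface every irreducible curve of negative self-intersection satisfies $C^2 = -2$ (indeed $C^2 = 2p_a(C) - 2 \ge -2$ by adjunction, with equality iff $C$ is smooth rational), and the inequality together with irreducibility of $C$ pins down $C^2 = -2$; then adjunction on $S$ gives $p_a(C) = 0$, so $C$ is a smooth rational curve, i.e. a $(-2)$-curve.

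The main obstacle I anticipate is bookkeeping the multiplicities and making the push-pull argument airtight when $Z_j$ is ramified over $C$ and when several components of $f^{-1}(C)$ (some possibly contracted, some not) are involved: one must be careful that the ramification divisor $R$ is computed on the open part $B \setminus g(\ex(g)) = W \setminus \ex(g)$ and that the identification of $R$ with its closure in $W$ is compatible with the intersection-theoretic computation on the smooth surface $W$ (or a smooth model of $B$). A secondary subtlety is ensuring the degree $\deg f|_{Z_j}$ is positive, i.e. that $Z_j$ is not contracted by $f$ — but this follows since $f$ is finite, so no curve is contracted. Once the intersection numbers are correctly assembled, the conclusion $C^2 = -2$ and hence "$(-2)$-curve" is immediate from adjunction on the $\mathrm{K3}$ surface $S$.
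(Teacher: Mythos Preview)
Your push--pull strategy has the inequality pointing the wrong way. From $\supp f^*C \subseteq R$ you correctly get that every component of $f^*C$ other than $Z_j$ meets $Z_j$ non-negatively, so writing $f^*C = m_j Z_j + D$ with $D\ge 0$ not containing $Z_j$ you obtain $Z_j \cdot f^*C = m_j Z_j^2 + Z_j \cdot D \ge m_j Z_j^2$. Pushing forward via the projection formula then gives only a \emph{lower} bound $(\deg f|_{Z_j})\, C^2 \ge m_j Z_j^2$, not the upper bound $C^2<0$ you need; the cross terms $Z_j\cdot D\ge 0$ could well make $Z_j\cdot f^*C$ non-negative, and nothing in your argument rules this out. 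Adjunction for the single rational curve $Z_j$ controls $Z_j^2 + Z_j\cdot K_B$, but $f^*C$ and $K_B = R$ only share the same \emph{support}, not the same multiplicities, so you cannot trade one intersection number for the other.

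The missing ingredient is a global negativity statement for the whole configuration, not just one component. In the paper this is supplied by pulling back all the way to the smooth surface $W$: from Lemmas~\ref{exceptional} and~\ref{support} one obtains $\supp\,q^*(f(Z_j)) \subseteq \ex(p)$, and then Mumford's result that the intersection matrix $(E_i\cdot E_k)$ on the exceptional set of a birational contraction is negative definite gives $(q^*C)^2 < 0$ at once. Combined with $(q^*C)^2 = (\deg q)\, C^2$ this yields $C^2 < 0$, after which your final adjunction step on the $\mathrm{K3}$ surface $S$ (namely $p_a(C)=\tfrac{1}{2}C^2+1\ge 0$) correctly finishes the proof. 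Without invoking negative definiteness of $\ex(p)$, or something equivalent, the argument does not close.
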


\begin{proof}
We have $\supp q^*f(Z_j) \subseteq \supp g^*R$ by assumption. Note
further that $\supp g^*R \subseteq \ex(p)$ by
Lemmas~\ref{exceptional} and \ref{support}. Consequently, we
obtain $\supp q^*f(Z_j) \subseteq \ex(p)$ and
$$
(\degree q)(f(Z_j)^2) = (q^*f(Z_j)^2) < 0,
$$
where the latter inequality is due to the fact that the matrix of
$(E_i \cdot E_k)$ is negative definite (see \cite{M}). The claim
now follows because the arithmetic genus $p_a(f(Z_j)) =
\displaystyle\frac{1}{2}(f(Z_j)^2) + 1 \ge 0$ and the equality
holds iff $f(Z_j) \simeq \mb P^1$.
\end{proof}

Let us turn to the case not covered by Lemma~\ref{ramification}:

\begin{lemma}
\label{quasifinite} Suppose that either $R = \emptyset$ or for
every $j$ there exists an effective cycle $\widetilde{Z_j} \ne 0$
such that $f^*(f(Z_j)) \ge Z_j + \widetilde{Z_j}$ and $\supp
\widetilde{Z_j} \nsubseteq R$. Then $\phi$ is \emph{onto}
$S^\prime := S \setminus \{\text{a finite set of points}\}$ and is
unramified over $S^\prime$.\footnote{~We use interchangeably the
terms ``unramified'' and ``smooth'' when applied to a finite
morphism. Note also that in our case $\phi: A^\prime
\longrightarrow S^\prime$ \emph{need not} be a topological
covering (the picture here is similar to e.\,g. $f: \C
\longrightarrow \C$, given by $f(z) = z^2(z - 1)$, which becomes
smooth on $\C\setminus{\{0\}}$).}
\end{lemma}

\begin{proof}
If $R = \emptyset$, then Lemmas~\ref{exceptional}, \ref{support}
imply that $\phi$ induces a \emph{finite surjective unramified
morphism} $A^\prime \longrightarrow S^\prime$, where $\prime$
indicates removing a finite set of points.

In the second case, when $R \ne \emptyset$, Lemma~\ref{support}
and the assumption imply that each $f(Z_j)$ is dominated by
$p(g_*^{-1}(\widetilde{Z_j}))$ via $\phi$ because
$g_*^{-1}(\widetilde{Z_j}) \nsubseteq \ex(p)$. Then it follows
from Lemma~\ref{exceptional} and the equality $q(\ex(p)) =
f(\displaystyle\bigcup_j Z_j)$ that $\phi$ induces a \emph{finite
surjective morphism} $A^\prime \longrightarrow S^\prime$ for
$A^\prime = W\setminus\ex(p)$ (cf. \eqref{diag}). Finally,
$\phi\big\vert_{A^\prime}$ is unramified, since $R \subseteq
\ex(p)$ (cf. \eqref{ram-div}).
\end{proof}

We will now assume that $\rho(S) = 1$ and complete the proof of
Theorem \ref{main} by establishing a contradiction (this should be
contrasted with Example~\ref{minus-two}). Write $\NS(S) = \Z \cdot
H_S$ for an ample generator $H_S$.

It follows from Lemmas~\ref{ramification} and \ref{quasifinite}
that $\phi$ induces a surjective unramified morphism $A^\prime
\longrightarrow S^\prime$ as above. Then, since the complements $A
\setminus A^\prime$ and $S \setminus S^\prime$ have codimension
$\ge 2$, the divisorial pull\,-\,back $\phi^*H_S$ of $H_S$ to $A$
is naturally defined. Namely, write $H_S$ as a sum $\displaystyle
\sum_i d_iD_i$ of prime Weil divisors $D_i$ with $d_i \in
\mathbb{Z}$, set $\phi^*D_i$ to be the Zariski closure of
$\phi^{-1}(D_i \cap S^\prime)$ and extend by linearity. Note that
$\phi^*$ preserves the linear equivalence.

So we have $\phi^*H_S = H_A$ for some divisor $H_A$ on $A$. Let $n
\in \mathbb{N}$ be such that $nH_S$ is \emph{very ample}. From the
construction we get
$$
n^2(H_A^2) = n^2(\deg \phi)(H_S^2) > 0
$$
and $(nH_A \cdot Z) = (nH_S \cdot \phi_*(Z)) > 0$ for every curve
$Z \subset A$. Hence $H_A$ is ample by the Nakai -- Moishezon
criterion. Then multiplying by a common factor one may assume that
\emph{both} $H_A$ and $H_S$ are very ample.

Consider a general smooth curve $C \in |H_A|$ with $C \subset
A^\prime$. Then the (scheme\,-\,theoretic) image $\phi(C) \subset
S$ is defined. Furthermore, since actually $\phi(C) \subset
S^\prime$, the preimage $\phi^{-1}\phi(C)$ is defined as well.

\begin{proposition}
\label{smooth} $\phi(C)$ is a smooth curve.
\end{proposition}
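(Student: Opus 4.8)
The plan is to reduce smoothness of $\phi(C)$ to a statement about the degree of $\phi\vert_C$, and then to settle that statement using the genus formulas that $\rho(S)=1$ makes available.

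First I would prepare the ground. By Bertini the general $C\in|H_A|$ is smooth and irreducible (as $H_A$ is very ample), and after imposing finitely many further conditions we may assume $C\subset A^\prime$, so that $\phi\vert_C\colon C\to S^\prime$ is a genuine morphism and $\phi(C)\subset S^\prime$. Now $\phi$ is unramified in the sense that $\Omega^1_\phi=0$ — this is exactly what the argument in Lemma~\ref{exceptional} produces, since $\Omega^1_{\tilde\phi}=0$ and $q\vert_{W\setminus\ex(p)}=\tilde\phi$. Restricting the conormal sequences then shows that $\phi\vert_C$ is unramified, so it factors as $C\xrightarrow{\ \psi\ }\widetilde{\phi(C)}\xrightarrow{\ \nu\ }\phi(C)_{\mathrm{red}}$ with $\nu$ the normalization and $\psi$ finite \'etale; moreover $\phi(C)$ is reduced, being the scheme-theoretic image of the reduced scheme $C$. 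Hence $\phi(C)=\phi(C)_{\mathrm{red}}$, and it is smooth precisely when $\nu$ is an isomorphism. (One cannot shortcut this by declaring $\phi\vert_{A^\prime}$ a topological covering — the text already warns against that — so some geometry is genuinely needed here.)

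Next, the bookkeeping. Write $\NS(S)=\Z\cdot H_S$ and $\phi(C)\equiv mH_S$. Adjunction on $A$ (with $K_A$ trivial) gives $2g(C)-2=H_A^2$; adjunction on $S$ gives $2p_a(\phi(C))-2=m^2H_S^2$; the projection formula for the finite morphism $\phi\vert_{A^\prime}$ gives $H_A^2=(\degree\phi)\,H_S^2$ together with $\degree\phi=m\cdot\degree(\phi\vert_C)$; and Riemann--Hurwitz for the \'etale cover $\psi$ gives $g(C)-1=\degree(\phi\vert_C)\cdot\big(g(\widetilde{\phi(C)})-1\big)$. Combining these four identities, the quantity $p_a(\phi(C))-g(\widetilde{\phi(C)})$ — the sum of the $\delta$-invariants at the singular points of $\phi(C)$ — works out to $\tfrac12\,m(m-1)\,H_S^2$. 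Thus $\phi(C)$ is smooth if and only if $m=1$, equivalently $\degree(\phi\vert_C)=\degree\phi$, equivalently $\phi^{-1}\phi(C)=C$ as divisors.

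The remaining, and main, point is therefore to prove $\phi^{-1}\phi(C)=C$ for the general $C$. Since $\rho(S)=1$, the divisor $\phi^{-1}\phi(C)$ is effective and linearly equivalent to $\phi^{*}(mH_S)=mH_A$, and it contains $C$; writing $\phi^{-1}\phi(C)=C+C^\prime$ with $C^\prime\ge 0$ it suffices to show $C^\prime=0$. For this I would use the genericity of $C$ together with the very ampleness of $H_A$ on $A$: any nonzero $C^\prime$ would have to meet $C$, so $\phi\vert_C$ would identify two distinct points of $C$, and one must check that, as $C$ varies in the very ample linear system $|H_A|$, the finitely many such ``collision'' points cannot persist for a general member. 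This is where the projective geometry of $A$ and the hypothesis $\rho(S)=1$ genuinely interact, and I expect it — not the formal genus count — to be the technical heart of the argument.
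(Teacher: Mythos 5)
Your reduction is fine as far as it goes: granting the unramifiedness of $\phi$ on $A^\prime$ (which the argument of Lemma~\ref{exceptional} does provide), the genus bookkeeping correctly shows that, writing $\phi(C)\in|mH_S|$, the total $\delta$-invariant of $\phi(C)$ equals $\tfrac12 m(m-1)H_S^2$, so smoothness of $\phi(C)$ is equivalent to $m=1$, i.e.\ to $\phi^{-1}\phi(C)=C$. The trouble is that you stop exactly there: the statement $\phi^{-1}\phi(C)=C$ is announced as ``the remaining, and main, point'' and is never proved, only accompanied by the hope that the ``collision'' points disappear for a general member of $|H_A|$. In the paper the logical order is the reverse of yours: Proposition~\ref{smooth} is proved first, by a direct projective-geometry argument, and $\phi^{-1}\phi(C)=C$ is then \emph{deduced} from it in Corollary~\ref{conn} (connectedness of the preimage plus smoothness of the image), after which $m=1$ produces the terminal contradiction of Theorem~\ref{main} via the indeterminacy locus. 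So the fact on which you propose to rest the proof is, in the paper, a corollary of the very proposition you are trying to prove; your proposal has a genuine gap precisely at what you yourself identify as its technical heart, rather than being a complete alternative route.

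Moreover, the genericity heuristic you sketch for the missing step points in an unpromising direction. Since $\degree\,\phi\ge 2$, every point of $C$ has further preimages of its image point, and the naive expectation is exactly that these sweep out \emph{extra} components of $\phi^{-1}\phi(C)$ (equivalently $m\ge 2$, as indeed happens for curves on the surfaces of Example~\ref{minus-two} with $\rho(S)=2$); varying $C$ in $|H_A|$ cannot by itself rule this out, and any nonzero residual $C^\prime$ automatically meets the ample curve $C$, so there is nothing generic to escape from. One has to make $\rho(S)=1$ and very ampleness interact, and that is what the paper's proof does without ever invoking $\phi^{-1}\phi(C)=C$: assuming a singular point $p$ of $\phi(C)$, it picks distinct $p_1,p_2\in C$ with $\phi(p_i)=p$ and tangent vectors $t_i\in T_{p_i}C$ with $\phi_*(t_1)\ne\phi_*(t_2)$, compares the codimension-$3$ linear system $\mathcal{L}_A\subset\phi^*|H_S|$ of curves tangent to $C$ along both $t_i$ with the codimension-$3$ system $\mathcal{L}_S\subset|H_S|$ of members singular at $p$, and obtains a contradiction by producing a pullback curve tangent along $t_1$ but not along $t_2$. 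If you want to keep your architecture, you would need an independent proof that $\phi(C)\in|H_S|$ for general $C$, and such a proof would have to carry at least the full weight of the tangency argument it is meant to replace.
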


\begin{proof}
Suppose that $\phi(C)$ is singular at some point $p$. Then, since
$\phi\big\vert_{A^\prime}$ is a local isomorphism, there exist two
\emph{distinct} points $p_1, p_2 \in C$ and tangent vectors $t_i
\in T_{p_i}C$ such that $\phi(p_i) = p$ and $\phi_*(t_1) \ne
\phi_*(t_2)$ (cf. Figure~\ref{fig-1}).

\begin{figure}
\includegraphics{picture.1}
\caption{~}\label{fig-1}
\end{figure}

Consider a general curve $H \in \phi^*|H_S| \subset |H_A|$ which
is tangent to $C$ along both $t_i$ (these $H$ constitute a
\emph{codimension $3$} linear system $\mathcal{L}_A \subset
\phi^*|H_S|$ by generality of $C$). Then the curve $\phi(H \cap
A^\prime)$ is singular at $p$.\footnote{~Note that a priori $H
\not\subset A^\prime$ and $\phi$ has indeterminacies on $H$.} The
linear system $\mathcal{L}_S \subset |H_S|$ of the closures of
such $\phi(H \cap A^\prime)$ has codimension $3$, since $\phi^*H_S
= H_A$, and consists of \emph{all} members from $|H_S|$ singular
at $p$. Indeed, these members are cut out on $S$ by hyperplanes
containing the tangent plane $T_pS$, which is a codimension $3$
condition.

On the other hand, take a general curve $\widetilde{H} \in
\phi^*|H_S|$ through $p_1,p_2$, which has tangency along $t_1$,
but not along $t_2$ (i.\,e. $\widetilde{H}$ is the preimage of a
curve from $|H_S|$ tangent to only one branch of $\phi(C)$ at
$p$). Then $\phi(\widetilde{H} \cap A^\prime)$ is singular at $p$
for $\phi_*(t_1) \ne \phi_*(t_2)$ and $\phi\big\vert_{A^\prime}$
being a local isomorphism, hence $\widetilde{H} \in
\mathcal{L}_A$, a contradiction.
\end{proof}

\begin{remark}
\label{remark:dedieu} Using \cite[Proposition 2.1, (ii)]{D}
together with our Proposition~\ref{smooth} it is tempting to claim
a contradiction and conclude the proof of Theorem~\ref{main}. But
this, however, disagrees with Example~\ref{minus-two} because the
argument in \cite{D} applies to \emph{arbitrary} $A$, $S$ and
$\phi$ of degree $\ge 2$. The reason of this confusion is that
\cite[Proposition 2.1, (ii)]{D} is formulated a bit imprecisely. A
more accurate formulation should be that $\phi(C)$ is \emph{at
most} nodal. In fact, Dedieu considers the scheme $J$
parameterizing various pairs $(C^*, x_1 + x_2)$, where $C^* \in
|H_A|$, $x_i \in C^*$ are distinct points with $\phi(x_1) =
\phi(x_2)$, and shows that $\dim J = \dim |H_A|$. Then the fiber
of the natural projection $J \longrightarrow |H_A|$ over a general
$C$ is either \emph{finite} (in which case $\phi(C)$ is nodal and
$\phi\big\vert_C$ is its normalization) or \emph{empty} (in which
case $\phi\big\vert_C$ is an isomorphism).
\end{remark}

\begin{corollary}
\label{conn} $\phi^{-1}\phi(C) = C$.
\end{corollary}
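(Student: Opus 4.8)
The plan is to show that the scheme-theoretic preimage $\phi^{-1}\phi(C)$ contains no component other than $C$, and that $C$ occurs in it with multiplicity one; connectedness of $\phi(C)$ (being smooth by Proposition~\ref{smooth}) and the fact that $\phi$ restricts to an unramified morphism on $A'$ then force $\phi^{-1}\phi(C)=C$. First I would record the numerical identity $\phi^*\phi(C)\cdot H_A = \phi(C)\cdot\phi_*H_A$; since $\phi$ has degree $d\ge 2$ and $\phi_*H_A = d\,H_S$ (as $\phi^*H_S=H_A$ and $\rho(S)=1$), while on the other hand $C\cdot H_A = H_A^2 = \phi^*H_S\cdot\phi^*H_S = d\,H_S^2$, one checks that $\phi^*\phi(C)$ and $C$ have the same intersection number with the ample class $H_A$. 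Hence $\phi^*\phi(C)$, which is an effective divisor containing $C$, must equal $C$ as a cycle: there is no room for extra components or higher multiplicity.

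Next I would promote this cycle-level equality to a scheme-theoretic one. Because $\phi|_{A'}$ is unramified, the pullback $\phi^*\phi(C)$ is reduced along $C$ (the differential of $\phi$ is injective, so $\phi$ is transverse to the smooth curve $\phi(C)$ wherever it is defined); combined with the cycle identity this gives $\phi^{-1}\phi(C)=C$ as schemes on $A'$, and since $C\subset A'$ and the complement of $A'$ in $A$ is finite, the same holds on all of $A$. The only subtlety to check is that no component of $\phi^{-1}\phi(C)$ escapes to the finitely many bad points of $A\setminus A'$; but $\phi(C)\subset S'$ by hypothesis, so $\phi^{-1}\phi(C)$ is supported in $A'$, and this is harmless.

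The main obstacle I anticipate is the bookkeeping around the adjective ``not a topological covering'': $\phi$ is unramified but perhaps not proper over $S'$, so one cannot simply invoke that fibres have constant cardinality $d$. The numerical argument above sidesteps this — it uses only that $\phi^*H_S=H_A$ and the projection formula for the proper pushforward of cycles on the compact surfaces $A$ and $S$ — so the degree count is legitimate even though the set-theoretic fibre count a priori is not. Once this is in hand the conclusion is immediate, and indeed Corollary~\ref{conn} is exactly the input needed to derive a contradiction with $\rho(S)=1$: a smooth curve $C\in|H_A|$ with $\phi^{-1}\phi(C)=C$ but $\deg\phi\ge 2$ cannot exist, since $\phi$ would then be generically one-to-one along $C$ yet globally of degree $\ge 2$.
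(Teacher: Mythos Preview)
Your numerical argument has a gap at the step ``one checks that $\phi^{*}\phi(C)$ and $C$ have the same intersection number with $H_A$''. From the projection formula you get $\phi^{*}\phi(C)\cdot H_A=d\,(\phi(C)\cdot H_S)$ with $d=\deg\phi$, while $C\cdot H_A=d\,H_S^{2}$; equality therefore requires $\phi(C)\cdot H_S=H_S^{2}$, i.e.\ $\phi(C)\in|H_S|$. But a priori one only knows $\phi(C)\in|kH_S|$ for some $k\ge 1$, and then $\phi^{*}\phi(C)\cdot H_A=k\,(C\cdot H_A)$, which leaves room for further components. In the paper the equality $k=1$ is deduced \emph{from} Corollary~\ref{conn} (via $\deg\phi=dk$ together with $d:=\deg(\phi|_C)=\deg\phi$), so your computation, as written, assumes what is to be proved.

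The paper's proof sidesteps all degree bookkeeping: $\phi^{-1}\phi(C)$ lies in an ample class on $A$ and is therefore connected; if it had a component $C'\ne C$ there would be a point $p\in C\cap C'$, and since $\phi|_{A'}$ is a local isomorphism the curve $\phi(C)$ would acquire two branches at $\phi(p)$, contradicting Proposition~\ref{smooth}. Note also that your final paragraph misreads the corollary: $\phi^{-1}\phi(C)=C$ does not make $\phi$ generically one-to-one along $C$; it makes $\deg(\phi|_C)=\deg\phi$, and it is the resulting identity $k=1$ (so that $C=\phi^{*}\phi(C)$ lies in $\phi^{*}|H_S|$ and hence passes through the indeterminacy locus) that produces the contradiction.
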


\begin{proof}
Firstly, the closure of the preimage $\phi^{-1}\phi(C)$ is
connected, since $\phi^*H_S = H_A$ is ample. Now, if
$\phi^{-1}\phi(C)$ has an irreducible component $C' \ne C$, then
there is a point $p \in C \cap C'$, so that $\phi(C)$ is singular
at $\phi(p)$ for $\phi\big\vert_{A^\prime}$ being a local
isomorphism (cf. Figure~\ref{fig-1}). But the latter contradicts
Proposition~\ref{smooth}.
\end{proof}

We have $\phi(C) \in |kH_S|$ for some $k \ge 1$ because $\rho(S) =
1$ by assumption. Now observe that $\phi_*C = \phi_*\phi^*H_S =
(\degree \phi)H_S$ by the projection formula and also $\phi_*C =
d\phi(C) = dkH_S$ for $d := \degree \phi\big\vert_C$. Hence
$\degree \phi = dk$. On the other hand, we have $d = \degree \phi$
by Corollary~\ref{conn}, and so $k = 1$. Thus $\phi(C) \in |H_S|$
and it follows that $|H_A| = \phi^*|H_S|$ for $C =
\phi^{-1}\phi(C)$. In particular, $\phi$ must be \emph{regular}
because $C \subset A^\prime$ by construction, hence it induces an
\emph{\'etale} cover, which is impossible for the $\text{K3}$
surface $S$.

Theorem \ref{main} is completely proved.

We conclude the paper by asking the following question (suggested
by the referee):

$$
\begin{array}{c}
\textit{Is there an analog of Theorem \ref{main} for
\emph{non\,-\,projecitve} compact complex surfaces?}
\end{array}
$$

\bigskip

\thanks{{\bf Acknowledgments.}
We would like to thank S. Galkin for introducing us to the
subject, treated in Theorem~\ref{main}, and F. Bogomolov, A.
Bondal, I. Dolgachev, V. Nikulin, Y. Zarhin, I. Zhdanovskiy for
their interest and valuable comments. We are also grateful to an
anonymous referee whose remarks and suggestions have improved the
exposition of our paper. This work was supported by the Priority
2030 Strategic Academic Leadership Program and by the HSE
University Basic Research Program.

\bigskip

\newpage

\begin{center}
\text{\huge\bf Corrigendum}\\
\medskip
{\small to the paper \cite{KK}}\\
\end{center}

\bigskip

\bigskip

\bigskip

(i) \emph{\bf On pp. 306 -- 307}: we considered the identity
$\phi^*H_S = H_A$ with very ample $H_S$ and argued that the
divisor $H_A$ is ample, using the projection formula. This
argument is not valid in general for irregular $\phi$. The issue
can be corrected as follows: from our constructions we get
$$
H_A = \phi^*(H_S) = p_*q^*(H_S)
$$
(cf. diagram (2.1)); we also have
$$
q^*(H_S)^2 = (\deg q)(H_S^2) > 0
$$
and $(q^*(H_S) \cdot Z) = (H_S \cdot q_*(Z)) > 0$ for any curve $Z
\subset W$ not contained in $\text{Ex}(q)$. It follows that the
divisor $q^*(H_S)$ is nef and big. Moreover, since the linear
system $q^*(|H_S|)$ is basepoint-free, Lemma 2.1 and Nakai --
Moishezon criterion imply that $H_A = p_*q^*(H_S)$ is ample.
Indeed, generic divisor $D$ from the \emph{mobile} linear system
$p_*q^*(|H_S|)$ satisfies
$$
(D^2) > 0, (D \cdot Z) > 0
$$
for any curve $Z \subset A$ (recall that $p$ is a sequence of
blow-ups and $p_*$ is the birational transform).

\bigskip

(ii) \emph{\bf About the linear system $\mathcal{L}_A$ from the
proof of Proposition 2.5}: this $\mathcal{L}_A$ is contained in
$\phi^*|H_S|$ and the term ``codimension'' applies to
$\mathcal{L}_A \subset \phi^*|H_S|$. Hence every $H \in
\mathcal{L}_A$ is (generically) a preimage of some curve from $S$.
But then, once $H$ passes through some point in $\phi^{-1}(p)$, it
must pass through \emph{all} points in this fiber. In particular,
passing through $p_1,p_2 \in \phi^{-1}(p)$ imposes \emph{only one}
linear condition on $\mathcal{L}_A$, since this holds for the
whole $\phi^*|H_S|$. Two other linear conditions are obtained from
$H$ being tangent along $t_1$ and $t_2$, which is the same as to
pass through two, infinitely close, points over $p_1$ and $p_2$,
respectively. All in all one gets codimension $3$ as in the text.

Descending $\mathcal{L}_A$ to $S$, yields the codimension $3$
linear subsystem $\mathcal{L}_S \subset |H_S|$, which consists of
sections of $S$ by hyperplanes containing the tangent plane $T_pS
= \left<\phi_*(t_1),\phi_*(t_2)\right>$. This implies that
$\mathcal{L}_S$ is comprised of \emph{all} curves in $|H_S|$
singular at $p$. Indeed, any curve in $|H_S|$ is singular at $p$
iff the corresponding hyperplane passes through $T_pS$, and this
is also a codimension $3$ condition.

\bigskip

(iii) \emph{\bf About Remark 2.6 and the paper \cite{D}}: the
arguments in the proof of \cite[Proposition 2.1,\,(ii)]{D} imply
the following

\newpage

{\bf Claim.}\footnote{This was pointed out to us by an anonymous
referee.} {\it Let $\phi: A \dashrightarrow S$ be a dominant
rational map of $\deg \phi \ge 2$ between two smooth projective
surfaces. Then for a sufficiently ample line bundle $H_A$ on $A$
and a general member $C \in |H_A|$ the image $\phi(C)$ is a
\emph{singular} curve. Here ``$H_A$ being sufficiently ample''
just requires any $0$-dimensional subscheme $\xi =
\{p_1,p_2,q_1,q_2\}$ in $A$ of length $4$, with $\phi(p_1) =
\phi(p_2),\phi(q_1) = \phi(q_2)$ and generic point $(p_1,p_2)
\times (q_1,q_2) \in (A \times_S A)^2$, to impose independent
linear conditions on $|H_A|$. The latter means that the
restriction map
\begin{equation}
\label{surj} H^0(A,\mathcal{O}_A(H_A)) \to
H^0(\xi,\mathcal{O}_{\xi}(H_A))
\end{equation}
is surjective.}

\medskip

The arguments from \cite{D} appeal to general points on $A$ and
are based on a simple parameter count. Hence they equally apply
and also give {\bf Claim} in the following setting:

\begin{itemize}

    \item the surfaces $S$ and $A$ are \emph{quasi-projective};

    \smallskip

    \item in place of (complete) $|H_A|$ one takes some linear system
    $\mathcal{L}$ satisfying \eqref{surj} (with appropriate notational adjustments).

\end{itemize}

Here is an elementary example showing that \emph{generic $\phi(C)$
can be smooth}:\footnote{A more elaborate example (not to mention
Proposition 2.5 from our paper) can be constructed for an Abelian
surface $A = S$ with isogeny $\phi$.}

\medskip

Take $A = S = \mathbb{A}^2_{x,y}$, $\mathcal{L}$ the linear system
of curves
$$
C:\ ay = bx^3 + cx^2 + dxy + e, \ a,b,c,d,e \in \mathbb{C},
$$
and
$$
\phi: (x,y) \mapsto (x,y^2).
$$
Then the curve $\phi(C) \subset S$ is given by equation
\begin{equation}
\label{su-qua} y(a - dx)^2 = b^2x^6 + 2bcx^5 + c^2x^4 + 2bex^3 +
2cex^2 + e^2.
\end{equation}

\medskip

\underline{\it Smoothness of generic $\phi(C)$.} Taking $y$- and
$x$-partial derivatives of the polynomial in \eqref{su-qua} and
equating them to zero, gives
$$
a - dx = 0, \qquad 6b^2x^5 + 10bcx^4 + 4c^2x^3 + 6bex^2 + 4cex =
0.
$$
Altogether the latter equations impose a \emph{non-trivial}
algebraic relation between the variables $a,b,c,d,e$. Hence
generic $\phi(C)$ is smooth.

\medskip

\underline{\it Condition \eqref{surj} for $\mathcal{L}$.} Take a
length $4$ scheme $\xi = \{(x_1,\pm y_1), (x_2,\pm y_2)\}$ as
above. Then the inclusion $\xi \subset C$ yields the following
linear relations in $\mathcal{L}$:
\begin{eqnarray}
\nonumber ay_1 - dx_1y_1 = 0,\\
\nonumber
bx_1^3 + cx_1^2 + e = 0,\\
\nonumber
ay_2 - dx_2y_2 = 0,\\
\nonumber bx_2^3 + cx_2^2 + e = 0.
\end{eqnarray}
These are linearly independent, since $y_i \ne 0, x_1 \ne x_2$ and
$x_i \ne 0,1$ due to the generality condition for $\xi$. Hence
$\mathcal{L}$ satisfies \eqref{surj}.
\\

Thus we see that {\bf Claim} is \emph{false}. This clarifies the
``at most nodal'' phrase in our Remark 2.6.

\medskip

{\it Remark.} Let us briefly explain where the reasoning of
\cite{D} goes wrong. Firstly, the fiber product $S \times_S S$
considered in {\it loc.\,cit.} belongs to $S_{\bf x} \times S_{\bf
y}$ rather than to the Hilbert scheme $S^{[2]}$, and is given by
\begin{equation}
\label{eq-phi} \phi({\bf x}) - \phi({\bf y}) = 0.
\end{equation}
Further, the image of $S \times_S S$ in $S^{[2]}$ is obtained
through the quotient $S \times S \longrightarrow (S \times S)
\slash \mu_2$ by the cyclic group $\mu_2$, which interchanges the
factors. Note that polynomials involved in \eqref{eq-phi} are not
$\mu_2$-invariant and so the image of $S \times_S S$ does not have
a good scheme structure inside of $S^{[2]}$ (like l.c.i. property
for instance). Thus to define the incidence locus $J$ from
\cite{D} properly one needs to do this in $S \times S \times
|\mathcal{O}_S(k)|$ first and then descend it,
$\mu_2$-equivariantly, to $S^{[2]}$.

Now observe that $S \times_S S \subset S \times S$ may be
\emph{reducible} (as in the preceding example). In this case,
projecting $J \subset S \times S \times |\mathcal{O}_S(k)|$ to one
of the irreducible components of $S \times_S S$ corresponds to
fixing just \emph{one} point in the fiber of $\phi$, which implies
that generic fiber of this projection has dimension $\dim
|\mathcal{O}_S(k)| - 1$ (again this is best seen for the above
example). Hence $\dim J = \dim |\mathcal{O}_S(k)| + 1$ ---
contrary to what was claimed in \cite{D}.

In short, \emph{set-theoretical} constructions applied to
\emph{schemes} need not always give what one expects, thus
creating a point of confusion.

\bigskip

(iv) \emph{\bf On p. 308}: In the final paragraph, the condition
$k \ge 1$ should be replaced by $k \ne 0$, with rational $k$; the
corresponding divisorial identities are understood numerically
then. This does not affect the rest of the argument.

\bigskip

\bigskip

{\bf Funding.} The work was carried out at the Center for Pure
Mathematics (MIPT) and was partially supported by the Russian
Science Foundation under grant 25-21-00083
(https://rscf.ru/project/25-21-00083/).

\bigskip

\bigskip

\end{document}